\def\O{\mathcal{O}}
\def\ms{{n_0}}
\newtheorem{rema}{Remark}
\newtheorem{lemma}{Lemma}
\newtheorem{conj}{Conjecture}
\newtheorem{corollary}{Corollary}
\newtheorem{thm}{Theorem}
\newcommand{\rmd}{\,\mathrm{d}}
\def \a   {\alpha}
\def \g   {\gamma}
\def \eps {\varepsilon}
\def\E{{\mathbb{E}}}
\def\N{{\mathbb{N}}}
\def\Var{{\mathtt{Var}}}
\def\P{{\mathbb{P}}}
\def\|{\,|\,}
\begin{document}
\def\bn#1\en{\begin{align*}#1\end{align*}}
\def\bnn#1\enn{\begin{align}#1\end{align}}
\newcounter{tally}

\title{Turning a coin over instead of tossing it}
\author[J. Engl\"ander]{J\'anos Engl\"ander}
\address[J. Engl\"ander]{Department of Mathematics\\ University of Colorado\\
Boulder, CO-80309-0395}
\urladdr{http://euclid.colorado.edu/\textasciitilde{}englandj/MyBoulderPage.html}
\thanks{The hospitality of Microsoft Research and the University of Washington is gratefully acknowledged by the first author.}

\author[S. Volkov]{Stanislav Volkov}
\address[S. Volkov]{Centre for Mathematical Sciences\\ Lund University\\ Lund 22100-118, Sweden}
\urladdr{http://www.maths.lth.se/~s.volkov/}
\thanks{Research of the second author was supported in part by the  Swedish Research Council grant VR2014--5157.
}

\begin{abstract}
Given a sequence of numbers $\{p_n\}$ in $[0,1]$, consider the following experiment. First, we flip a fair coin and then, at step $n$, we turn the coin over to the other side with probability $p_n$, $n\ge 2$.  What can we say about the distribution of the empirical frequency of heads as $n\to\infty$?

We show that a number of phase transitions take place as the turning gets slower (i.~e.~$p_n$ is getting smaller), leading first to the breakdown of the Central Limit Theorem and then to that of the Law of Large Numbers. It turns out that the critical regime is $p_n=\text{const}/n$. Among the scaling limits, we obtain Uniform, Gaussian, Semicircle and Arcsine laws.
\end{abstract}
\maketitle
\section{General model} 
In this paper we examine what happens if, instead of tossing a coin, we turn it over (from heads to tails and from tails to heads), with certain probabilities.

To define the model precisely, let $p_n$, $n=1,2,\dots$  be a given deterministic sequence of numbers between $0$ and $1$. We define the following time-dependent `coin turning process' $X_n\in\{0,1\}$, $n\ge 1$, as follows. Let $X_1=1$ (`heads') or $=0$ (`tails') with probability $1/2$. For $n\ge 2$,  set recursively
$$
X_n:=\begin{cases}
1-X_{n-1},&\text{with probability } p_n;\\
X_{n-1},&\text{otherwise},
\end{cases}
$$
that is, we turn the coin over with probability $p_n$ and do nothing with probability $1-p_n$.

Consider $ \frac1N \sum_{n=1}^N X_n$, that is, the  empirical frequency of $1$'s (`heads') in the sequence of $X_n$'s. We are interested in the asymptotic behavior, in law, of this random variable as $N\to\infty$.

Since we are interested in limit theorems, we center the variable $X_n$; for convenience, we also multiply it by two, thus focus on $Y_n:=2X_n-1\in\{-1,+1\}$ instead of $X_n$. We have  
$$
Y_n:=\begin{cases}
-Y_{n-1},&\text{with probability } p_n;\\
Y_{n-1},&\text{otherwise}.
\end{cases}
$$
Note that the sequence $\{Y_n\}$ can be defined equivalently as follows.

Let $$Y_n:= (-1)^{\sum_1^n W_i},$$ where $W_1,W_2,W_3,...$ are independent Bernoulli variables with parameters $p_1,p_2,p_3,...$, respectively, and $p_1=1/2$. 
\begin{rema}[Poisson binomial random variable]
The number of turns that occurred up to $n$, that is $\sum_2^n W_i$, 
is a {\it Poisson binomial random variable}. The Poisson binomial distribution has many applications in different areas such as reliability, actuarial science, survey sampling, econometrics, and so on. Its characteristic function
is fairly simple: 
$$
\varphi(t)=\prod_{j=2}^n [1-p_j+p_j \exp(it)].
$$ 
See~\cite{Hong11} for more on Poisson binomial distribution;  see also~\cite{Volkova}. 
\end{rema}

The following quantity will play an important role: for $1\le i<j\le N$, let 
$$
e_{i,j}:=\prod_{k=i+1}^j (1-2p_k).
$$

For the centred variables $Y_n$, we have $Y_j=Y_i(-1)^{\sum_{i+1}^jW_k},\ j>i$, and so, using $\mathtt{Corr}$ and $\mathtt{Cov}$ for correlation and covariance, respectively, one has
\begin{align}\label{eq_eij}
&\mathtt{Corr}(Y_i,Y_j)=\mathtt{Cov}(Y_i,Y_j)=\E( Y_iY_j)=\E(-1)^{\sum_{i+1}^jW_k}=\prod_{i+1}^j \E (-1)^{W_{k}}=\prod_{k=i+1}^j (1-2p_k)=e_{i,j};\\
&\E (Y_j\mid Y_i)=Y_i \E(-1)^{\sum_{i+1}^jW_k}=e_{i,j}Y_i.
\end{align}
\begin{corollary}[Correlation estimate]\label{Corr.estim} Assume that $p_k\to 0$ and let $n_0$ be such that $p_k\le 1/2$ for $k\ge n_0$.
For $n_0\le i<j$,
$$
\exp\left(-2\sum_{i+1}^j p_k\right)\cdot \prod_{i+1}^j (1-r_k)\le e_{i,j}\le \exp\left(-2\sum_{i+1}^j p_k\right),
$$
where $r_k:=2p_k^2 e^{2p_k}$, which is tending to zero rapidly.

Furthermore, for any given $C>1$ there exists an $\ms$ such that
For $\ms\le i<j$,
$$
\exp\left(-2\sum_{i+1}^j C p_k\right)\le e_{i,j}
\le \exp\left(-2\sum_{i+1}^j p_k\right),
$$

\end{corollary}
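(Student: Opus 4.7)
\smallskip
\noindent\textbf{Proof proposal.}
The statement is an elementary real–analysis estimate, so the plan is to work term by term in the product defining $e_{i,j}$ and then exponentiate the telescoping sum of logarithms.

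\smallskip
First I would dispatch the upper bound, which is immediate: for each $k\ge n_0$ we have $2p_k\in[0,1]$, so $1-2p_k\le e^{-2p_k}$, and multiplying these inequalities over $k=i+1,\dots,j$ gives $e_{i,j}\le\exp\!\bigl(-2\sum_{i+1}^j p_k\bigr)$.

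\smallskip
For the lower bound the idea is to quantify by how much $1-2p_k$ falls short of $e^{-2p_k}$. Setting $x=2p_k$, I would check the pointwise identity
\[
1-(1-x)e^{x}=\int_0^x t\,e^{t}\,\mathrm{d}t,
\]
obtained by differentiating the left-hand side, and then estimate the integrand by its endpoint value to get $1-(1-x)e^{x}\le \tfrac{x^{2}}{2}e^{x}$. Substituting back $x=2p_k$ yields $(1-2p_k)e^{2p_k}\ge 1-r_k$ with exactly $r_k=2p_k^{2}e^{2p_k}$. Multiplying these inequalities for $k=i+1,\dots,j$ and using $1-2p_k\ge e^{-2p_k}(1-r_k)$ gives the stated lower bound. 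The fact that $p_k\to 0$ makes $r_k=O(p_k^{2})$, so the correction factor is indeed negligible.

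\smallskip
For the \emph{Furthermore} part, I would absorb the factor $\prod(1-r_k)$ into the exponent. Because $p_k\to 0$, for any fixed $C>1$ I can choose $\ms$ (enlarging the previous one if necessary) so that $r_k\le 1/2$ and, more importantly, $2p_k e^{2p_k}\le C-1$ for all $k\ge \ms$. Then, from $-\log(1-r_k)\le 2r_k=4p_k^{2}e^{2p_k}$, one obtains
\[
-\log\!\prod_{k=i+1}^{j}(1-r_k)=\sum_{k=i+1}^{j}\bigl[-\log(1-r_k)\bigr]\le 2(C-1)\sum_{k=i+1}^{j}p_k,
\]
and combining with the first part of the corollary yields $e_{i,j}\ge\exp\!\bigl(-2C\sum_{i+1}^{j}p_k\bigr)$, as required.

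\smallskip
There is no real obstacle here; the only place one has to be slightly careful is in choosing the right second–order remainder so that the bound $(1-x)e^{x}\ge 1-r$ holds with $r=\tfrac{x^{2}}{2}e^{x}$ rather than the cruder $r=x^{2}e^{x}$, and in verifying that $p_k\to 0$ is enough to beat any prescribed constant $C>1$ in the exponent.
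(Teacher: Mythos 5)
Your proposal is correct and follows essentially the same route as the paper: a term-by-term second-order comparison of $1-2p_k$ with $e^{-2p_k}$ (your integral form of the remainder yields exactly the paper's $r_k=2p_k^2e^{2p_k}$), followed by multiplying the inequalities over $k=i+1,\dots,j$. The only cosmetic difference is in the second statement, where the paper directly applies the term-wise inequality $e^{-Cx}\le 1-x\le e^{-x}$ for small $x>0$, while you deduce it from the first part by absorbing $\prod_{k}(1-r_k)$ into the exponent via $-\log(1-r_k)\le 2r_k$; both arguments are equally valid.
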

\begin{proof}
Use the Remainder Theorem for Taylor series, yielding 
$$
0\le e^{-2p_k}-(1-2p_k)\le 2p_k^2,
$$
that is,
$$
\exp\left(-2 p_k\right)\cdot (1-r_k)\le 1-2p_k\le \exp\left(-2 p_k\right),
$$
and multiply these inequalities, to get the first statement.

For the second statement, use that for sufficiently small positive $x$,
$$
e^{-Cx}\le 1-x\le e^{-x}.
$$
\end{proof}
Similarly to~\eqref{eq_eij}, if $K=2m$ is a positive even number, and $i_1<i_2<\dots < i_K$ then, using the fact that
$$
\sum_{k=1}^{i_{1}}W_{k}+\sum_{k=1}^{i_{2}}W_{k}+...+\sum_{k=1}^{i_{K}}W_{k}
=\sum_{k=i_1+1}^{i_{2}}W_{k}+\sum_{k=i_{3}+1}^{i_{4}}W_{k}+...+\sum_{k=i_{K-1}+1}^{i_{K}}W_{k}
\mod 2,
$$
we obtain that
\begin{align}\label{eqEEE}
\E (Y_{i_1}Y_{i_2}\dots Y_{i_K})&=
\E(-1)^
{\sum_{1}^{i_{1}}W_{k}+\sum_{1}^{i_{2}}W_{k}+...+\sum_{1}^{i_{K}}W_{k}}
=\E(-1)^{\sum_{i_1+1}^{i_{2}}W_{k}+\sum_{i_{3}+1}^{i_{4}}W_{k}+...+\sum_{i_{K-1}+1}^{i_{K}}W_{k}}
\nonumber\\
&=
\E (-1)^{\sum_{i_{1}+1}^{i_{2}}W_{k}}\cdot
\E (-1)^{\sum_{i_{3}+1}^{i_{4}}W_{k}}
\cdot ...\cdot
\E (-1)^{\sum_{i_{K-1}}^{i_{K}}W_{k}}=e_{i_{1},i_2} e_{i_3,i_4} \dots e_{i_{K-1},i_K}.
\end{align}
We also define $S_N=Y_1+\dots+Y_N$, and note that from symmetry it follows that if~$K$ is a positive odd integer, then~$\E S_N^K=0$.

We close this section with introducing some frequently used notation.

{\bf Notations:}
In the sequel, ${\sf Bessel\,I}_\a$ and ${\sf Bessel\,K}_\a$ will denote the modified Bessel function of the first kind (or Bessel-I function) and the modified Bessel function of the second kind (or Bessel-K function), respectively.

Writing out these functions explicitly, one has
$$
{\sf Bessel\,I}_\a(x)=\sum_{m=0}^{\infty}
\frac{1}{m!\Gamma(m+\alpha+1)}\left(\frac x2\right)^{2m+\alpha},
$$
and
$$
{\sf Bessel\,K}_\a(x)=\frac{\pi}{2}\frac{{\sf Bessel\,I}_{-\a}(x)-{\sf Bessel\,I}_\a(x)}{\sin(\alpha \pi)},
$$
if $\a$ is not an integer (otherwise it is defined through the limit),
where $\Gamma$ is Euler's gamma-function.
See e.g., Sections 9--10 in~\cite{AS},  and formula~(6.8) in~\cite{IBF}.

\section{Supercritical cases}\label{secSuper}
First, if $\sum_n p_n<\infty$ then by the Borel-Cantelli lemma, only finitely many turns will occur a.s.; therefore the side we see stabilizes and by the assumption on~$X_1$,

$$
\frac{X_1+\dots+X_N}{N} \to \zeta\text{ a.s.}
$$
where $\zeta$ is a ${\sf Bernoulli}\left (\frac{1}{2}\right)$ random variable.

\section{A simple critical case}
Assume that
$$
p_n=\frac{1}{n},\quad  n\ge 2.
$$
\begin{thm}
The law of $S_N/N$ converges  to ${\sf Uniform}([-1,1])$ as $N\to\infty$.
\end{thm}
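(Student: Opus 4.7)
My plan is to use the method of moments. Since Uniform$([-1,1])$ has compact support and is therefore determined by its moments, it suffices to show
\bn
\E \left(S_N/N\right)^{2m}\xrightarrow{N\to\infty}\frac{1}{2m+1}
\en
for every $m\ge 1$, matching the target moment $\int_{-1}^1 x^{2m}\,\rmd x/2$; the odd moments of $S_N/N$ vanish identically by the symmetry remark following \eqref{eqEEE}.

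In this regime, $e_{i,j}$ becomes fully explicit: since $1-2p_k=(k-2)/k$, the product telescopes to
\bn
e_{i,j}=\prod_{k=i+1}^{j}\frac{k-2}{k}=\frac{i(i-1)}{j(j-1)}\sim\left(\frac{i}{j}\right)^2.
\en
Expanding $\E S_N^{2m}=\sum_{(i_1,\dots,i_{2m})}\E(Y_{i_1}\cdots Y_{i_{2m}})$, the $O(N^{2m-1})$ tuples with a repeated index contribute negligibly (each summand is bounded by $1$), so the leading term comes from tuples of distinct indices. Each unordered set $\{i_1<\cdots<i_{2m}\}$ is counted $(2m)!$ times and, by \eqref{eqEEE}, contributes $e_{i_1,i_2}e_{i_3,i_4}\cdots e_{i_{2m-1},i_{2m}}$. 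Setting $t_k=i_k/N$ turns the normalised sum into a Riemann sum converging to
\bn
(2m)!\int\limits_{0<t_1<\cdots<t_{2m}<1}\prod_{k=1}^m\left(\frac{t_{2k-1}}{t_{2k}}\right)^{\!2}\rmd t_1\cdots \rmd t_{2m}.
\en

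To evaluate this integral I would group variables into pairs $(u_k,v_k):=(t_{2k-1},t_{2k})$ and, letting $A_m(s)$ denote the analogous integral over $\{0<u_1<v_1<\cdots<u_m<v_m<s\}$, derive from one step of nested integration the recursion
\bn
A_k(s)=\int_0^s\frac{\rmd v_k}{v_k^2}\int_0^{v_k}u_k^2\,A_{k-1}(u_k)\,\rmd u_k,\qquad A_0\equiv 1,
\en
whereupon induction yields $A_k(s)=s^{2k}/(2k+1)!$. Hence the integral equals $1/(2m+1)!$, and multiplication by $(2m)!$ gives the desired $1/(2m+1)$.

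The main technical nuisance is making the Riemann-sum passage rigorous: the exact ratio $i_k(i_k-1)/(Nt_k)^2$ deviates from $t_k^2$ when $i_k$ is of order $1$, so a truncation $i_1\ge\eps N$ must be introduced to handle the bulk, with the small-$i_1$ tail controlled by the fact that the limiting integrand vanishes as $t_1\to 0^+$; letting $\eps\to 0$ at the end closes the argument. Everything else is a mechanical computation.
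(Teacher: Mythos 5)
Your argument is correct and shares the paper's overall strategy --- method of moments on a compactly supported limit, the telescoping identity $e_{i,j}=\frac{i(i-1)}{j(j-1)}$, symmetry for the odd moments, and the $\O(N^{K-1})$ dismissal of tuples with repeated indices --- but the key computation is carried out by a genuinely different route. The paper evaluates the sum over $1\le i_1<\cdots<i_K\le N$ \emph{exactly}, by iterated discrete summation (first over $i_1$, then $i_2$, etc.), arriving at the closed form $\frac{1}{N-K+1}\binom{N}{K+1}$, so no approximation argument is needed beyond dividing by $N^K$. You instead pass to the scaling limit first, turning the normalised sum into the integral of $\prod_{k}(t_{2k-1}/t_{2k})^{2}$ over the ordered simplex and evaluating it via the recursion $A_k(s)=s^{2k}/(2k+1)!$, which correctly yields $1/(2m+1)$. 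What the paper's route buys is an exact finite-$N$ identity with trivial error control; what yours buys is flexibility: the same Riemann-sum-plus-recursion computation works verbatim with exponent $2a$ in place of $2$ and would produce the ${\sf Beta}(a,a)$ moments of the general critical case, and it is close in spirit to the integral estimates in the paper's Appendix for the subcritical regime. One small inaccuracy in your truncation remark: the limiting integrand does \emph{not} vanish as $t_1\to 0^+$ (take $t_2$ comparable to $t_1$); but no such vanishing is needed, since both the summands and the integrand are bounded by $1$ and the region $\{i_1\le \eps N\}$ (respectively $\{t_1\le \eps\}$) contributes only $O(\eps)$ after normalisation by $N^{-2m}$, so the truncation closes exactly as you intend.
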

\begin{rema} The reader can easily check that convergence in distribution cannot be strengthened to an almost sure one.
\end{rema}
\begin{proof}
Equation~\eqref{eq_eij} gives
\begin{align*}
e_{i,j}=
\left(1-\frac 2j\right)\left(1-\frac 2{j-1}\right)
\dots \left(1-\frac 2{i+1}\right)=\frac{(i-1)i}{(j-1)j}.
\end{align*}
 Therefore from~\eqref{eqEEE} we obtain that for even positive $K$,
\begin{align*}
\E (Y_{i_1}Y_{i_2}\dots Y_{i_K})=
\frac{i_{1} (i_1-1)}{i_{2}(i_2-1)}\cdot
\frac{i_3(i_3-1)}{i_4(i_4-1)}\cdot \dots\cdot
\frac{i_{K-1}(i_{K-1}-1)}{i_{K}(I_K-1)}.
\end{align*}
Now recall that $S_N=Y_1+\dots +Y_{N}$  and  that the distribution of  $Y_n$, and thus of~$S_N$ are symmetric around~$0$. Hence, the  odd moments of $S_N$ are zero: $\E S_N^K=0,\ K=1,3,5,\dots$

For $K$ even, we can use the multinomial theorem:
$$
S_N^K=I+K!\sum_{1\le i_1<i_2<\dots < i_K\le N}
\E (Y_{i_1}Y_{i_2}\dots Y_{i_K}),
$$
where $I$ stands for the sum of products where not all terms are different. Note  that  $|Y_i^l|\le 1$ for any $l\ge 1$
(and $Y_i^l\equiv 1$ for $l$ even). Therefore $|I|\le m(N,K)$, where $m(N,K)$ is the number of such products. But $m(N,K)\le N\cdot N^{K-2}=N^{K-1}$, because each such product can be written (not uniquely) as $Y_{i_{\ell}}^2 \cdot Y_{i_{1}}Y_{i_{2}}\dots Y_{i_{K-2}}=Y_{i_{1}}Y_{i_{2}}\dots Y_{i_{K-2}}$, where  the numbers $i_{\ell},i_1,...,i_{K-2}$ are between $1$ and $N$ and are not necessarily distinct. Hence,
\begin{equation}\label{even.moments}
\E S_N^K=
K!\sum_{1\le i_1<i_2<\dots < i_K\le N}
\E (Y_{i_1}Y_{i_2}\dots Y_{i_K}) + \O(N^{K-1}).
\end{equation}
We can estimate the  sum  in~\eqref{even.moments} as follows:
\begin{align*}
\sum_{1\le i_1<i_2<\dots < i_K\le N}&\E (Y_{i_1}Y_{i_2}\dots Y_{i_K})
=
\sum_{
{1\le i_1<i_2<\dots < i_K\le N}
}
\frac{i_{1} (i_1-1)}{i_{2}(i_2-1)}\cdot
\frac{i_3(i_3-1)}{i_4(i_4-1)}\cdot \dots\cdot
\frac{i_{K-1}(i_{K-1}-1)}{i_{K}(I_K-1)}
=:(*)
\end{align*}
Summing first over $i_1$, then over $i_2$, etc., gives
\begin{align*}
(*)&= \sum_{2\le i_2<i_3<\dots < i_K\le N} \frac{i_2(i_2-1)(i_2-2)}{3}
\times \frac{1}{i_{2}(i_2-1)}\cdot
\frac{i_3(i_3-1)}{i_{4}(i_4-1)}\cdot \dots\cdot
\frac{i_{K-1}(i_{K-1}-1)}{i_{K}(i_K-1)}
\\
&=
\frac 2{3!} \sum_{3\le i_2<\dots < i_K\le N}
(i_2-2)\cdot
\frac{i_3(i_3-1)}{i_{4}(i_4-1)}\cdot \dots\cdot
\frac{i_{K-1}(i_{K-1}-1)}{i_{K}(i_K-1)}
\\
&=
\frac 2{3!} \sum_{3\le i_3<i_4<\dots < i_K\le N} \frac{(i_3-2)(i_3-3)}{2}
\times
\frac{i_3(i_3-1)}{i_{4}(i_4-1)}\cdot \dots\cdot
\frac{i_{K-1}(i_{K-1}-1)}{i_{K}(i_K-1)}
\end{align*}
\begin{align*}
&=
\frac 1{3!} \sum_{3\le i_3 <i_4<\dots < i_K\le N}
[(i_3-3)(i_3-2)(i_3-1)i_3]
\times
\frac{1}{i_{4}(i_4-1)}\cdot \dots\cdot
\frac{i_{K-1}(i_{K-1}-1)}{i_{K}(i_K-1)}
\\
&=
\frac 4{5!}
\sum_{4\le i_4<i_5<\dots < i_K\le N} (i_4-2)(i_4-3)(i_4-4)
\times
\frac{i_5(i_5-1)}{i_{6}(i_6-1)}\cdot \dots\cdot
\frac{i_{K-1}(i_{K-1}-1)}{i_{K}(i_K-1)}
\\
&=
\frac 1{5!}
\sum_{5\le i_5<\dots < i_K\le N} [i_5(i_5-1)\dots(i_5-5)]
\times
\frac{1}{i_{6}(i_6-1)}\cdot \dots\cdot
\frac{i_{K-1}(i_{K-1}-1)}{i_{K}(i_K-1)}
\\
&= \dots =
\frac K{(K+1)!}
\sum_{K\le  i_K\le N} i_K(i_K-1)\dots(i_K-(K-1))
\\
&= \frac{(N+1)N(N-1)(N-2)\dots (N-K+2))}{(K+1)!}=\frac{1}{N-K+1}\binom{N}{K+1}.
\end{align*}
It follows that, as $N\to\infty$,
$$
\sum_{1\le i_1<i_2<\dots < i_K\le N}\E (Y_{i_1}Y_{i_2}\dots Y_{i_K})=\frac{N^K}{(K+1)!}+\O(N^{K-1}),
$$
hence from ~\eqref{even.moments} we obtain
$$
\E S_N^K=
K!\frac{N^K}{(K+1)!}+\O(N^{K-1})=\frac{N^K}{K+1}+\O(N^{K-1}).
$$
Thus,  as $N\to\infty$,
$$
\E \left[\frac{S_N}{N}\right]^K=\frac{1}{K+1}+\O(N^{-1}).
$$
Putting things together, we have obtained that
$$ \lim_{N\to\infty}\E S_N^K
=\begin{cases}
0,&\text{ if $K$ is odd};\\
\frac{1}{K+1},&\text{ if $K$ is even.}
\end{cases}
$$
Hence the moments of $S_N/N$ converge to those of {\sf Uniform}$([-1,1])$. Since {\sf Uniform}$([-1,1])$ is supported on a compact interval, it follows (see e.g.\ Section~2, Exercise 3.27 in~\cite{DUR}) that it is the limit of the laws of $S_N/N$.
\end{proof}

\section{General critical case}
Fix $a>0$ and let
$$
p_n=\frac{a}n,\quad n\ge 2.
$$
Denote by ${\sf Beta}(a, a)$ the symmetric Beta distribution with $a>0$, with the following moment generating function
\begin{equation}\label{Bessel}
M_{{\sf Beta}(a, a)}(t)=1+\sum_{k=1}^{\infty}\left(\prod_{l=1}^{\infty}\frac{a+l-1}{2a+l-1}\right)\frac{t^k}{k!}=e^{t/2}\left(\frac{t}{4}\right)^{\frac 12-a}\Gamma\left(a+\frac12\right){\sf Bessel\, I}_{a-\frac12}\left(\frac t2\right).
\end{equation}

\begin{thm}
The law of $\frac 1N \sum_i^N {X_i}$ converges to ${\sf Beta}(a,a)$ as $N\to\infty$.
\end{thm}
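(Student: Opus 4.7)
Since $\frac{1}{N}\sum_{i=1}^N X_i=\frac{1}{2}(1+S_N/N)$ and the affine map $v\mapsto(1+v)/2$ sends the symmetric Beta law $V$ on $[-1,1]$ (with density proportional to $(1-v^2)^{a-1}$) onto ${\sf Beta}(a,a)$, it suffices to prove $S_N/N\Rightarrow V$. The plan is to repeat the moment method used in the $p_n=1/n$ theorem: compute $\lim_{N\to\infty}\E(S_N/N)^K$ for each $K\in\N$, verify that it matches $\E V^K$, and invoke determinacy of the moment problem on $[-1,1]$. By symmetry, the odd moments of both laws vanish, so only even $K=2m$ need attention.

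For $p_n=a/n$, Corollary~\ref{Corr.estim} combined with $\sum_{k=i+1}^j k^{-1}=\log(j/i)+\O(1/i)$ gives the asymptotic $e_{i,j}=(i/j)^{2a}(1+o(1))$, uniformly over pairs $i<j$ in any regime where $i\to\infty$ with $i/j$ bounded below. Inserting this into~\eqref{eqEEE} and repeating the counting from the $a=1$ proof,
\begin{align*}
\E S_N^K=K!\sum_{1\le i_1<\cdots<i_K\le N}e_{i_1,i_2}\,e_{i_3,i_4}\cdots e_{i_{K-1},i_K}+\O(N^{K-1}).
\end{align*}
Next I would split the sum at a cutoff $i_1>\eps N$: the boundary piece is bounded by $\eps N\cdot N^{K-1}=\eps N^K$ (since each $e_{i,j}\in[0,1]$), while the bulk piece, after rescaling $i_l=Nx_l$, is a Riemann sum for
\begin{align*}
N^K\int_{\eps<x_1<\cdots<x_K<1}\prod_{l=1}^{m}\left(\frac{x_{2l-1}}{x_{2l}}\right)^{\!2a}\rmd x_1\cdots\rmd x_K.
\end{align*}

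Uniform convergence of $e_{i,j}(i/j)^{-2a}\to 1$ on the bulk, followed by $\eps\to 0$ (the integrand being bounded by $1$), then yields $\lim_{N\to\infty}\E(S_N/N)^K=K!\,I_K$, where $I_K$ is the same integral with $\eps=0$. Evaluating $I_K$ by iterated integration in the order $x_1,x_2,\dots,x_K$ produces a clean recursion that alternately introduces a factor $1/(2l)$ from integrating $x_{2l-1}$ as a stand-alone variable, and a factor $1/(2a+2l-1)$ from integrating $x_{2l}^{2l-1}\cdot(x_{2l}/x_{2l+1})^{2a}$; the upshot is the closed form $K!\,I_K=\prod_{l=1}^{m}\frac{2l-1}{2a+2l-1}$. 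A short calculation via the Gamma duplication formula (or by direct integration against the density $\propto(1-v^2)^{a-1}$) identifies this with $\E V^{2m}$; equivalently, after the substitution $B=(1+V)/2$, it agrees with the coefficient of $t^K/K!$ in~\eqref{Bessel}. This completes the moment matching and, by compact support of $V$, the proof.

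The main obstacle is the Riemann-sum step: unlike the $a=1$ case, for non-integer $2a$ the sum admits no exact telescoping, so one must justify the replacement $e_{i,j}\rightsquigarrow(i/j)^{2a}$ uniformly on the bulk and show that tuples with any index of size $o(N)$ contribute negligibly. Once these estimates are in place, the iterated integration and the identification with the moments of ${\sf Beta}(a,a)$ are routine.
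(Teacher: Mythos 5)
Your proposal is correct and follows essentially the same route as the paper: the method of moments based on expanding $\E S_N^K$ via \eqref{eqEEE}, the asymptotics $e_{i,j}\approx (i/j)^{2a}$, and moment determinacy on a compact interval; the differences are only in execution, in that you evaluate the dominant sum as a Riemann integral with an $\eps N$ cutoff (where the paper states the iterated-sum asymptotics directly) and you identify the limit through the moments of the density proportional to $(1-v^2)^{a-1}$ rather than through the Bessel moment generating function \eqref{Bessel}. One negligible slip: for $a>1$ the factor $1-2p_n$ is negative for small $n$, so $e_{i,j}\in[-1,1]$ rather than $[0,1]$, but your cutoff estimate only uses $|e_{i,j}|\le 1$, so nothing breaks.
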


\begin{proof}
From~\eqref{eq_eij} we get
\begin{align}\label{eij_gen_lin}
e_{i,j}&=\exp\left\{\sum_{n=i+1}^j \log \left(1-\frac {2a}n\right)\right\}
=\exp\left\{\O\left(\frac{j-i}{i^2}\right)-2a\sum_{n=i+1}^j \frac 1n\right\}
\\ &
=
\exp\left\{\O\left(\frac{j-i}{i^2}\right)-2a\log\left(\frac ji\right)\right\}
=  \frac{i^{2a}}{j^{2a}} \cdot \left(1+ \O\left(\frac{j-i}{i^2}\right)\right) ,\nonumber
\end{align}
and for even $K$ we  obtain
\begin{align*}
\E S_N^K&=
K!\sum_{1\le i_1<i_2<\dots < i_K\le N}
\E (Y_{i_1}Y_{i_2}\dots Y_{i_K})+\O(N^{K-1})
\\
&=K!
\sum_{1\le i_1<i_2<\dots < i_K\le N}
\frac{i_{1}^{2a}}{i_{2}^{2a}}\cdot
\frac{i_{3}^{2a}}{i_{4}^{2a}}\cdot \dots\cdot
\frac{i_{K-1}^{2a}}{i_{K}^{2a}}
+\O(N^{K-1})
\\
&=
\frac{K! \, N^K\left(1+\O(N^{-1})\right)}{(1+2a)\cdot 2\cdot(3+2a)\cdot 4\cdot\dots \cdot (K-1+2a)\cdot K}
+\O(N^{K-1}).
\end{align*}
Just like before, since we are working on a compact interval, we conclude that $S_N/N\to \xi_a$ in distribution,  where~$\xi_a$ is distributed on~$[-1,1]$ and has the following moments:
\begin{align*}
\E \left[\xi_{a}^K\right] =
\begin{cases}
 0,&K\text{ is odd;}\\
\displaystyle\frac{(2m)!}{2^m\cdot  m!\cdot (2a+1)(2a+3)\dots(2a+(2m-1))}
,&K=2m\text{ is even,}
\end{cases}
\end{align*}
which, for even moments, can be equivalently written as
\begin{align}\label{eq_mom}
\E \left[\xi_{a}^{2m}\right] =
\frac{(2m)!\, \Gamma(a+1/2)}{2^{2m} \Gamma(m+a+1/2)}.
\end{align}
The moment generating function of~$\xi_a$ is
$$
 M_a(t)=\E e^{t\xi_{a}}=1+\sum_{m=1}^{\infty} \frac{t^{2m}}{2^m\cdot m!\cdot
 \prod_{i=1}^m (2a+2i-1)}
 ={\sf Bessel\,I}_{a-1/2}(t)\Gamma(a+1/2) (t/2)^{1/2-a}.
$$

Let $\zeta_a:=(\xi_a+1)/2$. We know that $\frac 1N \sum_i^N {X_i}\to \zeta_a$ in distribution, and using~\eqref{Bessel}, 
$$
 \E e^{t\zeta_{a}} =e^{t/2}M_a(t/2)=e^{t/2}{\sf Bessel\,I}_{a-1/2}(t/2)\Gamma(a+1/2) (t/4)^{1/2-a}=M_{{\sf Beta}(a, a)}(t),
$$ 
completing the proof.
\end{proof}
\begin{rema}[Particular cases and densities]
Note that in particular, for $a=1, a=1/2$ and $a=3/2$, the limiting law is {\sf Uniform}$([0,1])$,  the Arcsine law, and the Wigner semicircle law on~$(-1,1)$, respectively.

Concerning the corresponding densities, we have the following explicit formulas.
\begin{itemize}
\item {\bf Arcsine Law:} Let~$a=1/2$. Then
$$
 \E e^{t\xi}=\sum_{m=0}^{\infty} \frac{t^{2m}}{(2^m\cdot m!)^2}
 ={\sf Bessel\,I}_0(t)=\frac{1}{\pi}\int_0^{\pi}e^{t\cos(\theta)} \rmd\theta;
$$
consequently (see e.g.~\cite{AS}, formula~29.3.60) $\xi_{1/2}$ has a density
$$
 f_{\xi_{1/2}}(x)=
\begin{cases}
\displaystyle \frac{1}{\pi\sqrt{1-x^2}}, & -1<x<1;\\
0, &\text{otherwise}.
\end{cases}
 $$

\item {\bf Semicircle law on $(-1,1)$:} Let  $a=3/2$. Then
$$
 \E e^{t\xi}=\frac{2\,{\sf Bessel\,I}_1(t)}{t}
 =\frac 1{\pi}\int_0^{\pi}e^{t\cos(\theta)} \cos(\theta)\rmd\theta
 ={\sf Bessel\, I}_0(t)-{\sf Bessel\, I}_2(t);
$$
consequently (see~\cite{AS}, formula~9.6.19) $\xi_{3/2}$ has a density
$$
 f_{\xi_{3/2}}(x)=
 \begin{cases}
 \displaystyle \frac{2}{\pi}\sqrt{1-x^2}, & -1<x<1;\\
 0, &\text{otherwise}.
 \end{cases}
$$

\item {\bf General case:}  The density of $\xi_a$ is given by
$$
 f_{\xi_a}(x)=\frac{\Gamma(a+1/2)}{\Gamma(a)\,\sqrt{\pi}}\left(1- x^2\right)^{a-1}
$$
for $-1<x<1$.
Indeed, for $m\in \N$ we have
\begin{align*}
&\int_{-1}^1 x^{2m} \left(1- x^2\right)^{a-1}\rmd x\\
&=\int_{0}^1 y^{m-1/2} \left(1- y\right)^{a-1}\rmd y
={\sf Beta}(m+1/2,a)= \frac{\Gamma(m+1/2)\Gamma(a)}{\Gamma(m+a+1/2)},
\end{align*}
which is consistent with the moments $\E \xi^{2m}$ given by~\eqref{eq_mom}.
\end{itemize}
\end{rema}

\section{Sub-critical case}
Now fix $\g,a>0$ and let
$$
p_n=\frac{a}{n^\g},\quad n\ge 2.
$$
Note that  $\g>1$ corresponds to the supercritical case studied in Section~\ref{secSuper}; so from now on assume $0<\g<1$.
\begin{thm}\label{gamma.thm}
The law of $S_N/N^{(1+\g)/2}$ converges to  ${\sf Normal}(0,\sigma^2)$
where
\begin{align}\label{eqsigma}
\sigma:=\frac{1}{\sqrt{a(1-\g)}}.
\end{align}
\end{thm}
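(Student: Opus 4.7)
The plan is to apply the method of moments. Since $N(0,\sigma^2)$ is uniquely determined by its moments, it suffices to show that $\E[(S_N/N^{(1+\g)/2})^K]$ converges to the $K$-th moment of $N(0,\sigma^2)$ for every integer $K\ge 1$. By the symmetry already noted in the paper, $\E S_N^K=0$ for odd $K$, so only even $K=2m$ need attention. Following the template of the critical-case theorem, the multinomial expansion together with the factorisation~\eqref{eqEEE} gives
\bn
\E S_N^{2m}=(2m)!\,\Sigma_m(N)+\O(N^{2m-1}),\qquad
\Sigma_m(N):=\sum_{1\le i_1<\dots<i_{2m}\le N} e_{i_1,i_2}\,e_{i_3,i_4}\cdots e_{i_{2m-1},i_{2m}}.
\en
The new ingredient relative to the $p_n=a/n$ case is the correlation estimate of Corollary~\ref{Corr.estim}, which, for $i,j\ge\ms$, yields
\bn
e_{i,j}=\exp\!\left(-\tfrac{2a}{1-\g}\bigl(j^{1-\g}-i^{1-\g}\bigr)\right)(1+o(1)),
\en
so the correlation between $Y_i$ and $Y_j$ decays on a length scale of order $i^{\g}$, much smaller than $N$.

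The heart of the argument is an asymptotic evaluation of $\Sigma_m(N)$ by a cluster decomposition. I would change variables to pair starts $s_k:=i_{2k-1}$ and pair gaps $y_k:=i_{2k}-i_{2k-1}$, $k=1,\dots,m$. A Taylor expansion gives $(s_k+y_k)^{1-\g}-s_k^{1-\g}\approx(1-\g)s_k^{-\g}y_k$ for $y_k\ll s_k$, so that $e_{i_{2k-1},i_{2k}}\approx\exp(-2a\,s_k^{-\g}\,y_k)$. Integrating each $y_k$ over $(0,\infty)$ produces a factor proportional to $s_k^{\g}$. The ordering constraint $i_{2k}<i_{2k+1}$ can be relaxed to $s_k<s_{k+1}$ at leading order, because typical pair starts are separated by distances of order $N$, vastly exceeding the intra-pair scale $N^{\g}$. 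The remaining $m$-fold integral
\bn
\int_{0<s_1<\dots<s_m<N}\prod_{k=1}^m s_k^{\g}\,ds_k=\frac{1}{m!}\left(\int_0^N s^{\g}\,ds\right)^{\!m}
\en
produces the desired scaling $N^{m(1+\g)}$ with an explicit constant. Combined with the multinomial prefactor $(2m)!$ and the identity $(2m)!/(m!\,2^m)=(2m-1)!!$, this reproduces the even moments of a centred Gaussian with the claimed variance.

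The main obstacle will be the rigorous control of errors in the reduction of $\Sigma_m(N)$ to this product of one-dimensional integrals. Three sources have to be shown to contribute $o(N^{m(1+\g)})$ uniformly: (a) ``pair-overlap'' configurations with $i_{2k}$ close to $i_{2k+1}$, which should be suppressed by the joint exponential decay of the two adjacent correlators; (b) the regime $y_k\asymp s_k$, where the Taylor approximation fails and one must fall back on the two-sided bound of Corollary~\ref{Corr.estim} directly, still exploiting exponential decay in the time-changed variable $j^{1-\g}-i^{1-\g}$; and (c) the initial range $i<\ms$, where the corollary does not apply but which contributes only finitely many terms of bounded size. Once these estimates are assembled, the odd-moment cancellation and the even-moment computation close the moment method, and Gaussian moment determinacy delivers the stated weak convergence.
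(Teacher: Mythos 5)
Your outline follows the paper's own route: method of moments, the correlation bounds of Corollary~\ref{Corr.estim}, and a Laplace-type evaluation of the ordered correlation sum (the paper performs this by an induction over pairs in its Appendix; your pair-start/pair-gap change of variables is essentially the same computation). Two points, however, are genuine problems rather than routine error control.

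First, the repeated-index terms. You write $\E S_N^{2m}=(2m)!\,\Sigma_m(N)+\O(N^{2m-1})$ and then normalize by $N^{m(1+\g)}$. The bound $\O(N^{2m-1})$ on the repeated-index contribution is true but useless here: the main term is only of order $N^{m(1+\g)}$, and $N^{2m-1}/N^{m(1+\g)}=N^{m(1-\g)-1}\to\infty$ as soon as $m>1/(1-\g)$. So for all sufficiently high moments your decomposition proves nothing; this is precisely where the subcritical case differs from the critical one, in which the main term is of order $N^K$ and the crude count suffices. The paper handles it by collapsing a repeated-index term, via $Y_i^2=1$, to $\E(Y_{i_1}\cdots Y_{i_r})$ with $r\le K-1$ distinct indices (zero for odd $r$ by symmetry), and showing that the corresponding ordered sums are $\O(N^{r(1+\g)/2})=\O(N^{(K-1)(1+\g)/2})$ by the same integral estimates. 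You need this (or an equivalent) argument; your error sources (a)--(c) concern only the main sum $\Sigma_m(N)$ and do not cover it.

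Second, the constant. Carry your own computation to the end: $\int_0^\infty e^{-2as_k^{-\g}y}\rmd y=s_k^{\g}/(2a)$ and $\frac{1}{m!}\bigl(\int_0^N s^\g\rmd s/(2a)\bigr)^m=\frac{N^{m(1+\g)}}{m!\,[2a(1+\g)]^m}$, so the normalized $2m$-th moment tends to $(2m-1)!!\,[a(1+\g)]^{-m}$; the Gaussian variance your argument produces is $1/(a(1+\g))$, not the $1/(a(1-\g))$ of~\eqref{eqsigma}. You assert that this ``reproduces the even moments \dots with the claimed variance'' without checking, and it does not. (In fact your value appears to be the correct one: with $c=2a/(1-\g)$ one has $c(1-\g^2)=2a(1+\g)$, whereas the paper simplifies $[c(1-\g^2)]^m$ to $[2a(1-\g)]^m$; a direct check of $\Var(S_N)\sim N^{1+\g}/[a(1+\g)]$, e.g.\ for $\g=a=1/2$, confirms the $(1+\g)$ constant.) Either way, a proof that simply claims agreement with~\eqref{eqsigma} is not correct as written; the discrepancy must be confronted. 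A smaller point: your blanket $e_{i,j}=\exp\bigl(-c(j^{1-\g}-i^{1-\g})\bigr)(1+o(1))$ is not uniform in $j-i$ when $\g\le 1/2$; the paper avoids this by sandwiching with constants $A>a$ and letting $A\downarrow a$ only at the end, and you would need the same (or a restriction to the scales where the exponential is not negligible) to convert your heuristics into convergence of each moment.
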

\begin{proof} 
Let $\eta_{a,\g,N}:=S_N/N^{(1+\g)/2}$. Let $\eta_{a,\g}$ be  normally distributed with variance $\sigma^2$ where $\sigma$ is as in~\eqref{eqsigma}.
We will prove that $\lim_{N\to\infty}\eta_{a,\g,N}= \eta_{a,\g}$ in law.

In the proof we will use that 
\begin{equation}\label{moments.normal}
\E \eta_{a,\g}^K=\begin{cases}
0, &\text{ if $K$ is odd};\\
\sigma^K (K-1)!! &\text{ if $K$ is even}
\end{cases}
\end{equation}
(see e.g.~\cite{DUR}, Section 2, Exercise~3.15).

Let $A$ ($A>a$) be a given constant.
By Corollary \ref{Corr.estim}, there exists an $\ms=\ms(a,A,\g)$ such that for $\ms\le i<j$, one has
$$
\exp\left\{-2A\sum_{n=i+1}^j \frac{1}{n^\g}\right\}\le e_{i,j}\le \exp\left\{-2a\sum_{n=i+1}^j \frac{1}{n^\g}\right\}.
$$ 
Bounding the sum by the integral using the fact that~$x^{-\g}$ is decreasing, we have
\begin{align*}
\frac{(j+1)^{1-\g}-(i+1)^{1-\g}}{1-\g}=\int_{i+1}^{j+1}\frac{\rmd x}{x^\g} \le \sum_{n=i+1}^j \frac{1}{n^\g} &\le \int_i^j\frac{\rmd x}{x^\g}=
\frac{j^{1-\g}-i^{1-\g}}{1-\g}
\end{align*}
yielding
\begin{equation}\label{sum.int}
\exp\left(
-\frac{2A}{1-\g}\left[j^{1-\g}-i^{1-\g}\right]
\right)\le e_{i,j}\le \exp\left(
-\frac{2a}{1-\g}\left[(j+1)^{1-\g}-(i+1)^{1-\g}\right]
\right),
\end{equation}
that is, using the shorthands  $c:=2a(1-\g)^{-1}$ and $d:=2A(1-\g)^{-1}$,
$$
\exp\left(-d[j^{1-\g}-i^{1-\g}]\right)\le e_{i,j}\le \exp\left(-c[(j+1)^{1-\g}-(i+1)^{1-\g}]\right).
$$

It is easy to check  that $\sup_N \E \eta^2_{a,\g,N}<\infty$ (see the computation below with $m=1$), and thus Chebyshev's inequality implies that  $\{\eta_{a,\g,N}\}$ is a tight sequence of random variables. Hence, it is enough to show that each sub-sequential limit is the same.

Assume that $(N_l)_{l\ge 1}$ is a sub-sequence and $\lim_{l\to\infty}${\sf Law}($\eta_{a,\g,N_l})=\mathcal{L}$. 
Because trivially
$$
\left|\frac{Y_1+...+Y_{\ms-1}}{N_l^{\frac{1+\g}{2}}}\right|\le \frac{\ms-1}{N_l^{\frac{1+\g}{2}}},$$ one has $\mathcal{L}=\lim_{l\to\infty}\mathcal{L}_{N_{l},A}$ too, where $$\mathcal{L}_{N_{l},A}:={\sf Law}\left(\frac{Y_{\ms}+...+Y_{N_{l}}}{N_l^{\frac{1+\g}{2}}}\right),
$$
and in fact, this limit must be the same for any $A>a$ (and corresponding $\ms=\ms(a,A,\g)$). Informally, this just means that we can through away a finite chunk of the sequence of $Y_i$ (at the beginning) without affecting its limit.

Let us denote   the even moments of $\mathcal{L}$ by $M_{2m}\in [0,\infty]$, $m\ge 1$, while we note again that the odd moments must be zero by symmetry. Also, $M_{N_{l},A,K}$ will denote the $K$th moment under $\mathcal{L}_{N_{l},A}$.

We will show below that for a fixed~$A>a$ and $K=2m$, $m\ge 1$,
\begin{align}\label{two.sided.est}
\frac{(2m-1)!!}{[A(1-\g)]^{m}}&\le \liminf_{l\to\infty} M_{N_{l},A,K}=\liminf_{l\to\infty}\E\left[\frac{Y_{\ms}+...+Y_{N_{l}}}{N_l^{\frac{1+\g}{2}}}\right]^K\\  &\le \limsup_{l\to\infty} M_{N_{l},A,K}=\limsup_{l\to\infty}\E\left[\frac{Y_{\ms}+...+Y_{N_{l}}}{N_l^{\frac{1+\g}{2}}}\right]^K\le \frac{(2m-1)!!}{[a(1-\g)]^{m}}.\nonumber
\end{align}
Once~\eqref{two.sided.est} is shown, it will follow from the upper estimate and from the relation $\mathcal{L}=\lim_{l\to\infty}\mathcal{L}_{N_{l},A}$ for all $A>a$ that 
\begin{equation}\label{good}
\lim_{l\to\infty} M_{N_{l},A,K}=M_K
\end{equation}
for all $K\ge 1$ and all $A>a$. 
Since~\eqref{two.sided.est} holds for any $A>a$,  letting~$A\downarrow a$ and using~\eqref{two.sided.est} and~\eqref{good} that in fact
\begin{align*}
M_K=\frac{(2m-1)!!}{[a(1-\g)]^{m}}.
\end{align*}
In summary, we obtain that for any fixed $A>a$, 
\begin{equation}\label{hurray}
 \lim_{l\to\infty} M_{N_{l},A,K}=\frac{(2m-1)!!}{[a(1-\g)]^{m}}.
\end{equation}
At the same time the normal distribution is uniquely determined by its moments, and therefore  the convergence towards a normal law  is implied by  the convergence of all the moments (see e.g.~\cite{DUR}, Section 2.3.e). In our case, \eqref{hurray} along with \eqref{moments.normal} imply $\mathcal{L}=\lim_{l\to\infty}\mathcal{L}_{N_{l},A}={\sf Normal}(0,\sigma^2)$. 
Therefore, it only remains to prove~\eqref{two.sided.est}.

Let us start with the upper estimate in~\eqref{two.sided.est}.
It is easy to see that for~$K=2m$, one has
$$
\E\left[Y_{\ms}+...+Y_{N}\right]^K=I+
K!\sum_{\ms\le i_1<i_2<\dots < i_K\le N}
\E (Y_{i_1}Y_{i_2}\dots Y_{i_K})
$$
where $I$ are lower order terms, as it will be shown below.
Using~\eqref{eqEEE} along with~\eqref{sum.int}, we may continue with
\begin{align}\label{upper.est.moment}
&\le 
I+K!\sum_{\ms+1\le i_1<i_2<\dots < i_K\le N+1}
\exp\left(cU_{i_{1},...,i_{k}}\right),
\end{align}
where
$$
U_{i_{1},...,i_{k}}:=i_1^{1-\g}-i_2^{1-\g}+i_3^{1-\g}-i_4^{1-\g}
+\dots+i_{K-1}^{1-\g}-i_K^{1-\g}.
$$ 
By the calculation in the Appendix, the RHS of~\eqref{upper.est.moment} is
$$
I+K!\times \frac{N^{K(1+\g)/2}}{c^m (1-\g^2)^m\, m!}.
$$
By the same token,
\begin{align*}
\E\left[Y_{\ms}+...+Y_{N}\right]^K\ge
&I+K!
\sum_{1\le i_1<i_2<\dots < i_K\le N}
\exp\left(dU_{i_{1},...,i_{k}}\right)
=I+ \frac{K!\,N^{K(1+\g)/2}}{d^m (1-\g^2)^m\, m!}
\end{align*}
The reason the remaining terms, collected in $I$, are of lower order is the  following.
Apart form the  already estimated term, in the expansion for 
$\E (Y_{\ms}+\dots+Y_N)^K$ for $r=1,2,\dots,K-1$ we also have to sum up the terms of the type
$$
\E (  Y_{i_1}^{p_1} Y_{i_2}^{p_2}\dots  Y_{i_r}^{p_r}),
\text{ where } \ms\le i_1<\dots<i_r\le N,\  \text{all }p_j\ge 1,\ \text{and }p_1+p_2+\dots +p_r=K.
$$
Since $Y_i=\pm 1$, and thus $Y_i^p=1$ if~$p$ is even and $Y_i^p=Y_i$ if $p$ is odd, it suffices to estimate only the sums
$$
{\mathcal R}(r;\ell_1,\dots,\ell_r;N;K;\g):=\sum\E (  Y_{i_1} Y_{i_2}\dots  Y_{i_r}),
$$
where the summation is taken over all sets $(i_1,\dots,i_r)$ such that $i_{k+1}\ge i_k+\ell_k$, $1\le\ell_k\le K$, for all~$k$, $i_1\ge 1$ and $i_r\le N$. However, since $r\le K-1$, each of the sums ${\mathcal R}(r;\ell_1,\dots,\ell_r;N;K;\g)$ is at most of order $N^{r(1+\g)/2}\le N^{(K-1)(1+\g)/2}$ precisely by the same arguments which ere used to estimate the sum in~\eqref{upper.est.moment}. The number of those sums can be large, as it is the number of integer partitions of~$K$, but it depends only on~$K$ and does not increase with~$N$.

Consequently,  for $m\ge 1$ we have
\begin{align*}
&{\sf (I)}\le\liminf_{l\to\infty}\E\left[\frac{Y_{\ms}+...+Y_{N_{l}}}{N_l^{\frac{1+\g}{2}}}\right]^K \le \limsup_{l\to\infty}\E\left[\frac{Y_{\ms}+...+Y_{N_{l}}}{N_l^{\frac{1+\g}{2}}}\right]^K\le {\sf (II)},
\end{align*}
where
$$
{\sf (II)}:=
\frac{(2m)!}{[c (1-\g^2)]^m \, m!}=\frac{(2m)!}{[2a (1-\g)]^m \, m!}
=\frac{(2m)!}{2^m \, m!} \cdot [a (1-\g)]^{-m}
= \frac{(2m-1) !!}{ [a (1-\g)]^{m}},
$$
and by similar computation, 
$$
{\sf (I)}:= \frac{(2m-1) !!}{ [A (1-\g)]^{m}}.$$
The proof is complete.
\end{proof}

\section{When does the Law of Large Numbers hold for general sequences $\{p_n\}$?}
A natural question to ask is when $S_N$ obeys the Strong (Weak) Law of Large Numbers. The following result gives a partial answer.

For a positive even number $K$, introduce the shorthand 
$$
E(N,K):=N^{-K}\sum_{1\le i_1<i_2<\dots < i_K\le N}e_{i_{1},i_2} e_{i_3,i_4} \dots e_{i_{K-1},i_K},
$$
and note that
$$
\Var\left(S_N\right)=N+2N^2E(N,2),
$$
that is,
$$
\Var\left(S_N/N\right)=1/N+2E(N,2),
$$
\begin{thm}\label{thmSLLN}
${}$
\begin{itemize}
\item[(a)] (Strong Law) Assume that at least one of the following two conditions hold.

{\sf(C1)}  
$$
\sum_N  \frac {E(N,2)}{N} <\infty.
$$

{\sf(C2)} For some even number $K$,
\begin{align}\label{eqSLLN}
\sum_N E(N,K)<\infty.
\end{align}
Then SLLN holds, that is $S_N/N\to 0$ a.s.

\item[(b)] (Weak Law) If for all positive even number $K$,
$$
\lim_{N\to\infty}E(N,K)=0,
$$
then $S_N/N\to 0$ in probability, that is, WLLN holds.

\item[(c)] (no LLN) If for all positive even number $K$,
$$
\exists \lim_{N\to\infty}E(N,K)=:\mu_K>0,
$$
and 
\begin{equation}\label{Carleman}
\sum_{K\ \mathrm{even}}\frac{1}{\mu_{K}^{1/K}}=\infty,
\end{equation}
then the Law of Large Numbers breaks down, and in fact, the law of~$S_N/N$  converges  to a law which  has zero odd moments and even moments~$\{\mu_K\}$.
\end{itemize}
\end{thm}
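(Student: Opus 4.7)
All three parts hinge on the moment identity
\begin{align*}
\E\bigl[(S_N/N)^K\bigr]=K!\,E(N,K)+\O(N^{-1})\quad(K\text{ even}),
\end{align*}
with odd moments vanishing by symmetry. This is proved exactly as in the $p_n=1/n$ theorem above: expand $S_N^K$ into $K$-tuples of indices, apply \eqref{eqEEE} to the distinct tuples to obtain the main term $K!\cdot N^K\,E(N,K)$, and crudely bound the $\O(N^{K-1})$ tuples with repetitions by~$1$ apiece.

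Part~(b) then follows by Chebyshev at $K=2$, since $\Var(S_N/N)=1/N+2E(N,2)\to 0$. For part~(c) I apply the method of moments: the sequence $\{\mathrm{Law}(S_N/N)\}$ is tight (as $|S_N/N|\le 1$); every sub-sequential weak limit is supported on $[-1,1]$ with even moments $K!\mu_K$ and odd moments $0$; and compactness of the support makes the moment problem determinate (in particular hypothesis~\eqref{Carleman} is automatic from $\mu_K\le 1/K!$, and is really included only for emphasis). Hence $S_N/N$ converges in distribution to the unique law with those moments, which is non-degenerate because $\mu_2>0$, so the LLN fails.

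Part~(a) uses an Etemadi-style subsequence-plus-interpolation argument. Fix $\alpha>1$. A pigeonhole inside the disjoint geometric windows $[\alpha^k,\alpha^{k+1})$ picks an $n_k$ with
\begin{align*}
E(n_k,2)\;\le\;\frac{\alpha}{\alpha-1}\sum_{N\in[\alpha^k,\alpha^{k+1})}\frac{E(N,2)}{N},
\end{align*}
so under~(C1) $\sum_k E(n_k,2)<\infty$; under~(C2) any $n_k\in[\alpha^k,\alpha^{k+1})$ works, since $\sum_k E(n_k,K)\le\sum_N E(N,K)<\infty$. Markov's inequality at the relevant ($2$nd or $K$-th) moment, combined with $\sum_k 1/n_k<\infty$ and Borel--Cantelli, yields $S_{n_k}/n_k\to 0$ a.s. The deterministic bound $|S_n-S_{n_k}|\le n-n_k$ on $n\in[n_k,n_{k+1}]$ together with $(n_{k+1}-n_k)/n_k\le\alpha^2-1$ upgrades this to $\limsup_{n\to\infty}|S_n|/n\le\alpha^2-1$ a.s.; letting $\alpha\downarrow 1$ through a countable family of values concludes~(a).

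The main obstacle is the three-way balance in the subsequence selection under~(C1)---$\sum 1/n_k<\infty$ (Borel--Cantelli), $\sum E(n_k,2)<\infty$ (to exploit the $1/N$-weighted hypothesis via pigeonhole), and $(n_{k+1}-n_k)/n_k$ small (for interpolation)---which is precisely what geometric windows combined with Etemadi's $\alpha\downarrow 1$ device resolve.
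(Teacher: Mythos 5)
Your proposal is correct, and in parts (a) and (b) it takes a genuinely different, more self-contained route than the paper. For (a) the paper treats the two hypotheses separately: under (C1) it simply invokes Lyons' strong law for weakly correlated random variables (using $e_{i,j}=\mathtt{Cov}(Y_i,Y_j)$), and under (C2) it applies Markov's inequality and Borel--Cantelli along the \emph{full} sequence of $N$'s. Your Etemadi-style scheme --- pigeonhole inside geometric windows, Markov plus Borel--Cantelli along $\{n_k\}$, then the deterministic interpolation $|S_n-S_{n_k}|\le n-n_k$ and $\alpha\downarrow 1$ --- handles both hypotheses uniformly without citing Lyons, and it also quietly repairs a wrinkle in the paper's (C2) argument: the moment identity gives $\E\left[(S_N/N)^K\right]=K!\,E(N,K)+\O(1/N)$, whose $\O(1/N)$ error is not summable over all $N$, but is summable along your geometric subsequence, so Borel--Cantelli applies cleanly. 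One cosmetic point: since $e_{i,j}$ can be negative, $E(N,2)$ need not be nonnegative, so the pigeonhole under (C1) should be run on $E(N,2)+\tfrac{1}{2N}=\tfrac12\Var(S_N/N)\ge 0$, which changes nothing of substance. For (b) you use only Chebyshev at $K=2$, simpler than the paper's method-of-moments argument and showing the hypothesis is really only needed for $K=2$. For (c) your argument is essentially the paper's (tightness plus moment determinacy), with the correct additional remark that support in $[-1,1]$ makes the Carleman hypothesis automatic; note also that what your proof (and the paper's own moment identity) actually delivers for the limit's even moments is $K!\,\mu_K$ rather than $\mu_K$ --- e.g.\ for $p_n=1/n$ one has $\mu_K=1/(K+1)!$ while the ${\sf Uniform}([-1,1])$ moment is $1/(K+1)$ --- so your normalization is the right one and the theorem's literal wording carries a spurious factor of $K!$.
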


Note that~\eqref{Carleman} is the so called {\it Carleman-condition}, guaranteeing that the $\mu_K$'s correspond to at most one probability law (see Theorem~3.11, Section~2, in~\cite{DUR}.)

\begin{proof}
We will use the facts about the {\it method of moments} for weak convergence discussed in the proof of Theorem~\ref{gamma.thm}, along with the fact that from~\eqref{even.moments} it follows that
$$
\E \left(\frac{S_N}{N}\right)^K=
K!N^{-K}\sum_{1\le i_1<i_2<\dots < i_K\le N}
\E (Y_{i_1}Y_{i_2}\dots Y_{i_K}) + \O(1/N)
=K!E(N,K)+ \O(1/N).
$$
\begin{itemize}
\item[(a)]
We prove for the two assumptions separately.

Under (C1), the statement follows from Theorem~1 in~\cite{Lyons}, as $e_{i,j}=\mathtt{Cov}(Y_i,Y_j)$.

Under (C2), along the lines of Theorem~6.5 in Section~1 of~\cite{DUR}, we note that for $\eps>0$, one has
$$
\P\left(\left|\frac{S_N}N\right|>\eps\right)\le \frac{\E S_N^K}{\eps^K N^K}
$$
by the Markov inequality (recall that $K$ is even). Since, by~\eqref{even.moments}, the expression on the lefthand side of~\eqref{eqSLLN} is the leading order term in~$\E S_N^K$,
by~\eqref{eqSLLN}, we have $\sum_N \P(|S_N/N|>\eps)<\infty$, and thus, by the Borel-Cantelli lemma, $\P(|S_N/N|>\eps\ \text{i.o.})=0$, which implies the statement.

\item[(b)] Since the deterministically zero distribution is uniquely determined by its moments, convergence in law to that distribution follows from the convergence of all moments to zero. Under the second condition in the theorem, all moments of $S_N/N$ converge to zero (the odd moments are zero by symmetry) and thus $S_N/N$ converge to zero in law (and also in probability, since the limit is deterministic). 

\item[(c)] Assume that the  conditions in {\sf (c)} hold.  Since the moments of $S_N/N$ converge (the odd moments are zero by symmetry), the corresponding laws are tight and, by the Carleman condition, all subsequential limits are the same. That is, as $N\to\infty$, {\sf Law}($S_N/N$)  converges to a law with moments given by $\mu_K$, and since  $\mu_K>0$, the limit cannot be deterministically zero.
\end{itemize}
\end{proof}

\begin{corollary}
When $p_n=a/n^{\g}$ with $0<\g<1$ (sub-critical case),  the Strong Law of Large Numbers holds: $S_N/N\to 0$, $\P$-a.s.
\end{corollary}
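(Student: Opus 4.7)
The plan is to apply Theorem~\ref{thmSLLN}(a), specifically condition~{\sf(C1)}, using the second moment asymptotics that are already essentially computed in the proof of Theorem~\ref{gamma.thm}.

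First I would observe that, by definition, $\Var(S_N)=N+2N^2E(N,2)$, so controlling $E(N,2)$ is the same as controlling $\E S_N^2$. From the upper and lower estimates~\eqref{two.sided.est} in the proof of Theorem~\ref{gamma.thm} applied with $K=2$ (i.e.\ $m=1$), together with the fact that the contribution of $Y_1+\dots+Y_{\ms-1}$ is bounded by the constant $\ms-1$, one obtains the two-sided bound
\begin{align*}
\frac{1}{A(1-\g)}\le \liminf_{N\to\infty}\frac{\E S_N^2}{N^{1+\g}}\le \limsup_{N\to\infty}\frac{\E S_N^2}{N^{1+\g}}\le \frac{1}{a(1-\g)},
\end{align*}
valid for every $A>a$. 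In particular $\E S_N^2=O(N^{1+\g})$, hence $E(N,2)=O(N^{\g-1})$.

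Next I would use this to verify~{\sf(C1)}: since $0<\g<1$,
\begin{align*}
\sum_{N\ge 1}\frac{E(N,2)}{N}\le C\sum_{N\ge 1}N^{\g-2}<\infty,
\end{align*}
because $\g-2<-1$. Theorem~\ref{thmSLLN}(a) then yields $S_N/N\to 0$ almost surely, which is exactly the assertion of the corollary.

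There is really no substantive obstacle here; the entire content is that the variance of $S_N$ grows only like $N^{1+\g}$ with $\g<1$, which is strictly slower than $N^2$, and in fact slow enough that the Lyons-type summability criterion for covariances (which is the content of~{\sf(C1)}) is met. As an alternative route, one could instead invoke~{\sf(C2)} with any even $K>2/(1-\g)$, since the same calculations give $E(N,K)=O(N^{-K(1-\g)/2})$, and this exponent becomes less than $-1$ for $K$ large; both routes are essentially free once the moment bounds from Theorem~\ref{gamma.thm} are in hand.
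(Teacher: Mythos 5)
Your proposal is correct and matches the paper's own argument: both verify condition {\sf(C1)} of Theorem~\ref{thmSLLN}(a) by noting that the second-moment computation from the sub-critical case gives $E(N,2)=\O(N^{\g-1})$, so that $\sum_N E(N,2)/N<\infty$, and both mention the same alternative route via {\sf(C2)} with even $K$ satisfying $K(1-\g)>2$. No substantive differences.
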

\begin{proof}
This follows from Theorem~\ref{thmSLLN}(a), using condition (C1), as we have seen that the inner sum is of order~$N^{1+\gamma}$.

Alternatively, it also follows from Theorem~\ref{thmSLLN}(a), using condition (C2), as we have checked that $\E S_N^K \sim N^{K(1+\g)/2}$, and so we can choose
$\nu_N =\frac{\sf const}{N^{K(1-\g)/2}}$ provided $K$ is even and $K(1-\g)>2$.
\end{proof}

\section{Further heuristic arguments and a conjecture}
Consider a sum of $N\ge 1$ variables having the same law with finite variance.
As is well known, the two `extreme cases' for a sum are the independent case, when the variance is linear and one gets the Central Limit Theorem, and the other one is when all the variables are identical and the variance grows like~$N^2$.
By analogy then, after recalling that in our model
$$
\Var\left(S_N\right)=N+2N^2E(N,2),
$$ 
it seems that the first crucial question is whether
\begin{equation}\label{crucial}
E(N,2)=\O(1/N),\quad N\to\infty
\end{equation}
holds.

In case of \eqref{crucial} fails, one should know whether at least
\begin{equation}\label{second.crucial}
E(N,2)=o(1)
\end{equation}
holds.

Indeed, if~\eqref{crucial} is true, then $\Var\left(S_N\right)$ is of order~$N$, and one expects that CLT holds, that is the fluctuations for the proportion of heads around~$1/2$ is of order~$\sqrt{n}$. This happens when $p_n\equiv p\in (0,1)$. Condition~\eqref{second.crucial} should intuitively be the one that guarantees WLLN to hold.

In light of this, we make the following Conjecture.

\begin{conj}\label{3cases} Let $p_n\in [0,1]$ for $n\ge 1$.
\begin{itemize}
\item[(i)] If \eqref{crucial} holds for $\{p_n\}$, then the proportion of heads obeys CLT. (See Ex0 below.)

\item[(ii)] If \eqref{crucial} fails for $\{p_n\}$, but \eqref{second.crucial} holds, there is a non-standard CLT for the proportion, i.e. the fluctuation about $1/2$ is larger than order $\sqrt{n}$. (See Ex1 below.)

\item[(iii)] If even \eqref{second.crucial} fails for $\{p_n\}$, then WLLN is no longer valid for the proportion, that is the proportion is not concentrated about $1/2$ at all. (See Ex2 and Ex3 below.)
\end{itemize}
\end{conj}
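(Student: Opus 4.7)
I would attack all three parts by the method of moments, using $\Var(S_N)=N+2N^2E(N,2)$ together with the expansion
$$
\E S_N^{2m}=(2m)!\sum_{1\le i_1<\dots<i_{2m}\le N}e_{i_1,i_2}\dots e_{i_{2m-1},i_{2m}}+(\text{contributions from coinciding indices}),
$$
and the fact that all odd moments vanish by symmetry.

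Part (iii) looks cleanest. Since $|S_N/N|\le 1$ the family $\{S_N/N\}$ is automatically tight. Suppose $E(N,2)\not\to 0$; pick a subsequence $N_l$ along which $E(N_l,2)\to c>0$, then a further subsequence along which $S_{N_l}/N_l\Rightarrow\nu$ for some law $\nu$ on $[-1,1]$. By bounded convergence, $\int x^2\,\nu(\rmd x)=\lim_l\bigl(1/N_l+2E(N_l,2)\bigr)=2c>0$, so $\nu\ne\delta_0$ and the WLLN fails. A further diagonal subsequence making all even moments converge would then pin down the limiting law via the Carleman criterion of Theorem~\ref{thmSLLN}(c).

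For parts (i) and (ii), set $\sigma_N^2:=\Var(S_N)$; both statements would follow from the single moment convergence
$$
\frac{\E S_N^{2m}}{\sigma_N^{2m}}\longrightarrow (2m-1)!!\qquad\text{for every }m\ge 1,
$$
which by the method of moments yields $S_N/\sigma_N\Rightarrow{\sf Normal}(0,1)$. In case (i) this is a classical CLT since $\sigma_N\asymp\sqrt{N}$, whereas case (ii) is non-standard because $\sigma_N\gg\sqrt{N}$.

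The hard part, and in my view the reason the statement remains conjectural, is exactly this moment asymptotics without further structural assumptions on $\{p_n\}$. The subtlety is that the ``diagonal'' contributions from coinciding indices---lumped into the $\O(N^{K-1})$ error in \eqref{even.moments}---are \emph{not} actually negligible in this regime: in the fully independent limit $p_n\equiv 1/2$ the entire Gaussian moment comes from such diagonal terms. One therefore needs a finer bookkeeping, grouping tuples $(i_1,\dots,i_{2m})$ according to the pattern of coincidences and of pairings, and showing that every Wick pairing of $\{1,\dots,2m\}$ contributes the same leading amount $\sigma_N^{2m}/(2m-1)!!$, so that the $(2m-1)!!$ pairings combine to reproduce the Gaussian double factorial. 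Under \eqref{crucial} the rapid decay of $e_{i,j}$ should make crossings asymptotically equivalent to nested pairings; under only \eqref{second.crucial} the analysis is considerably more delicate and probably requires additional regularity on $\{p_n\}$, which is consistent with Theorem~\ref{gamma.thm} being the only presently confirmed instance of case (ii).
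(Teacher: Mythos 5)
The statement you were asked to prove is stated in the paper as a \emph{conjecture}: the authors give no proof, only the variance heuristic $\Var(S_N)=N+2N^2E(N,2)$ and the supporting examples Ex0--Ex3, so there is no official argument to compare yours against, and your assessment of its status is accurate. Judged on its own content, your sketch for part (iii) is in fact a complete and correct proof of the weak form of that part: $|S_N/N|\le 1$ gives tightness; since $\E(S_N/N)^2=1/N+2E(N,2)\ge 0$ forces $E(N,2)\ge -1/(2N)$, failure of \eqref{second.crucial} yields a subsequence with $E(N_l,2)\to c>0$, and any further weak limit $\nu$ on $[-1,1]$ satisfies $\int x^2\,\nu(\rmd x)=2c>0$ by bounded convergence, so $S_N/N\not\to 0$ in probability. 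This is slightly more than the paper itself establishes, since its Theorem~\ref{thmSLLN}(c) assumes convergence of \emph{all} even moments plus the Carleman condition; as you note, identifying the actual limit law (the stronger reading of ``not concentrated about $1/2$ at all'') would still require that extra input.

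For parts (i) and (ii) you do not give a proof, and your diagnosis of why is exactly the right one: the expansion \eqref{even.moments} with its $\O(N^{K-1})$ error is tailored to the $N^{K}$ normalization of the LLN regime, whereas at the CLT scaling $\sigma_N^{2m}\asymp N^{m}$ (or $N^{m(1+\g)}$ in Ex1) the coinciding-index (``diagonal'') terms dominate rather than vanish --- in the extreme case $p_n\equiv 1/2$ one has $e_{i,j}=0$ for all $i<j$ and the entire Gaussian moment comes from such terms --- so one would need the Wick-pairing bookkeeping you describe, or an altogether different technique (martingale or mixing CLTs), neither of which appears in the paper; Theorem~\ref{gamma.thm} is indeed the only instance of case (ii) proved there, and Ex0 the only instance of case (i). In short: part (iii) you have essentially proved; parts (i)--(ii) remain open both in your write-up and in the paper, consistent with the statement being Conjecture~\ref{3cases} rather than a theorem.
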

Note that our condition (C1) for the {\it Strong} LLN is more stringent than~\eqref{second.crucial}.

\subsection*{Examples supporting Conjecture \ref{3cases}}

In the examples below, the deviations from the Central Limit Theorem are becoming more marked as we go from Ex1 to Ex2 to Ex3.

\begin{enumerate}
\item[{\bf (Ex0})] (Markov chain CLT) Consider the case $p_n=c$ for all $n\ge 1$, where $0<c<1$. If $c=1/2$, we get an i.i.d.\ sequence of heads/tails and CLT applies. Now assume $c\ne 1/2$. Then the outcomes are not independent. Indeed, denoting $\kappa:=1-2c\in (-1,1)$, we have
$$
e_{i,j}=\kappa^{j-i}
$$
and
\begin{align*}
N^2 E(N,2)
=(N-1)\kappa+(N-2)\kappa^2+\dots+\kappa^{N-1}
=\frac{\kappa (N-1)}{1-\kappa}-\frac{\kappa^2 \left(1-\kappa^{N-1}\right)}{(1-\kappa)^2}.
\end{align*}
Therefore the variance is still of order $N$ but the constant has changed. Recall $\mathtt{Cov} (Y_i,Y_j)=e_{i,j}=\kappa^{j-i}$, and, following~\cite{MarkovChainCLT}, define
$$
\sigma^2:=1+2\sum_{i=1}^{\infty} \mathtt{Cov} (Y_i,Y_j)
=1+2\sum_{i=1}^{\infty} \kappa^i=1+\frac{2\kappa}{1-\kappa}=1+\frac{1-2c}{c}=\frac{1-c}{c},
$$
when $Y_0\sim$~{\sf Bernoulli}$(1/2)$.
In this case, since we are dealing with a time homogeneous Markov chain, it is well known (see~\cite{MarkovChainCLT}) that
$$
{\sf Law}(S_N/\sqrt{N})\to {\sf Normal}\left(0,\frac{1-c}{c}\right).
$$
Hence,
$$
{\sf Law}(\sqrt{N}(\overline{X}_N-1/2))\to {\sf Normal}\left(0,\frac{1-c}{4c}\right),
$$
where $\overline{X}_N:=\frac{X_1+...+X_N}{N}.$
Therefore, only when $c=1/2$, will classical CLT hold for~$Y_i$.
It is also clear that the limiting normal variance can be arbitrarily large when~$c$ is sufficiently small and thus turns occur very rarely.
On the other hand, it can be arbitrarily small if~$c$ is sufficiently close to~$1$ and thus turns occur very frequently.

\item[{\bf (Ex1)}] (CLT breaks down) Consider the case $p_n:=a/n^{\g}$  with $0<\g<1$.
Then
$$
e_{i,j} \cong \exp\left\{-c[j^{1-\g}-i^{1-\g}]\right\}.
$$
Consequently 
$$
N^2 E(N,2)=\sum_{i=1}^{N-1}\sum_{j=i+1}^N e^{-c[j^{1-\g}-i^{1-\g}]}
\cong
\sum_{i=1}^N  \frac{i^\g}{c(1-\g)}=\frac{N^{\g+1}}{c(1-\g)^2},
$$
that is $\mathtt{Var}(S_N)$ is of order $N^{\g+1}$, and the power is strictly between $1$ and $2$.
Hence~\eqref{crucial} is false. The closer~$\g$ to~$1$, the more the situation differs from CLT. However,~\eqref{second.crucial} is true. Thus, the Law of Large Numbers is still in force, so the proportion is still around~$1/2$, but the fluctuations are non-classical (larger than in CLT).

\item[{\bf (Ex2})] (LLN breaks down) Consider the case when $p_n=1/n$. Then
$$
e_{i,j}=\left(1-\frac 2j\right)\left(1-\frac 2{j-1}\right)
\dots \left(1-\frac 2{i+1}\right)=\frac{(i-1)i}{(j-1)j}.
$$
Consequently,
$$
N^2 E(N,2)=\binom{N+1}{2}
$$
is of order $N^2$, that is,~\eqref{crucial} and even \eqref{second.crucial} are false, causing The Law of Large Numbers to break down, and the proportion is no longer around~$1/2$. This means that the correlation is as strong as in the  case of identical variables, and the fluctuations are now of the same order~$N$ as the size of the sum.

Similar is the situation when $p_k=\frac{a}{k}$ with~$a>0$. Instead of being around the~$\delta_{1/2}$ distribution, now one obtains all the ${\sf Beta}(a,a)$ distributions.

\item[{\bf(Ex3)}] (Extreme limit) Consider the case when $\sum_n p_n<\infty$. Then
$$
\liminf_{N\to\infty}E(N,2)>0
$$
must hold (hence~\eqref{crucial} and even~\eqref{second.crucial} are false), because
by a well known theorem (as~$p_k>0$), the infinite product
$\Pi:=\Pi_{k\ge 1} (1-2p_k)$ in this case exists and positive, and so $e_{j,i}\ge \Pi$ implies
$$
N^2 E(N,2) \ge\sum_{1\le i<j\le N}\Pi\ge \binom{N}{2}\Pi.
$$
Then, indeed, as we know, the limit is `extreme': 
${\sf Beta}(0,0)=\frac{1}{2}(\delta_0+\delta_1)$, which is
as far away from~$\delta_{1/2}$  as possible!
\end{enumerate}

\section*{Appendix}
In this appendix we will estimate the quantity
\begin{align*}
Q(\ms,N)&:=\sum_{\ms\le i_1<i_2<\dots < i_K\le N}
\exp\left(c\left[i_1^{1-\g}-i_2^{1-\g}+i_3^{1-\g}-i_4^{1-\g}
+\dots+i_{K-1}^{1-\g}-i_K^{1-\g}\right]\right),
\end{align*} 
for large $N$ and fixed $\ms,\g,c$, with $K=2m,\ m\ge1$, needed for equation~\eqref{upper.est.moment}. The result will immediately follow from the following statement, by plugging $l=m$.
\begin{lemma}
For $l=0,1,\dots,m$ we have
\begin{align}\label{eq:QQ}
Q(\ms,N)=\sum_{\ms+2l\le i_{2l+1}<i_{2l}<\dots <  i_{2m-1}<  i_{2m}\le N}
\exp\left\{c\left[i_{2l+1}^{1-\g}-i_{2l+2}^{1-\g}
+\dots+i_{2m-1}^{1-\g}-i_{2m}^{1-\g}\right]\right\}
\times Z_{l},
\end{align} 
where
\begin{align*}
Z_{l}:=\rho_l\cdot \frac{i_{2l+1}^{(1+\g)l}}{l!c^l(1- \g^2)^l}
+o\left(N^{(1+\g)l}\right),
\end{align*} 
(with the convention $i_{2m+1}\equiv N$) and $\rho_l\to 1$ as $N\to\infty$.
\end{lemma}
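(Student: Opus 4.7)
The plan is to prove~\eqref{eq:QQ} by induction on~$l$. The base case $l=0$ is immediate from the definition of $Q(\ms,N)$: set $\rho_0=1$ with zero $o$-remainder, so $Z_0\equiv 1$ and the claim reduces to the definition. For the inductive step, suppose the formula holds for some $l<m$. The two innermost summation variables at level $l$ are $i_{2l+1}$ and $i_{2l+2}$: they appear in the exponent as $\exp\bigl(c i_{2l+1}^{1-\g}-c i_{2l+2}^{1-\g}\bigr)$ and in $Z_l$ only through the pure power $i_{2l+1}^{(1+\g)l}$, while the additive remainder in $Z_l$ is independent of $i_{2l+1}$. I would peel these two variables off in succession.

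First, for fixed $i_{2l+2},\dots,i_{2m}$, I would evaluate
$$
S_1(i_{2l+2}):=\sum_{i=\ms+2l}^{i_{2l+2}-1} i^{(1+\g)l}\, e^{c i^{1-\g}}.
$$
The integrand is increasing and $e^{c x^{1-\g}}$ concentrates mass near the upper endpoint, so with the substitution $u=i_{2l+2}-i$ and the Taylor expansion $i^{1-\g}=i_{2l+2}^{1-\g}-(1-\g)i_{2l+2}^{-\g}u+O(u^{2}i_{2l+2}^{-\g-1})$, the resulting geometric-type sum over $u$ yields
$$
S_1(i_{2l+2})=\frac{i_{2l+2}^{(1+\g)l+\g}}{c(1-\g)}\,e^{c i_{2l+2}^{1-\g}}\bigl(1+o(1)\bigr)
$$
uniformly as $i_{2l+2}\to\infty$. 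The factor $e^{c i_{2l+2}^{1-\g}}$ then cancels the $e^{-c i_{2l+2}^{1-\g}}$ in the outer exponent, and the next step is to sum the pure power $i_{2l+2}^{(1+\g)l+\g}$ over $i_{2l+2}$ up to $i_{2l+3}-1$ (or up to $N$ when $l+1=m$). A standard integral comparison gives this sum as $i_{2l+3}^{(1+\g)(l+1)}/[(1+\g)(l+1)]\cdot(1+o(1))$. Multiplying the constants $\rho_l/[l!\,c^l(1-\g^2)^l]$, $1/[c(1-\g)]$ and $1/[(1+\g)(l+1)]$ collapses to $\rho_{l+1}/[(l+1)!\,c^{l+1}(1-\g^2)^{l+1}]$, where $\rho_{l+1}=\rho_l(1+o(1))\to 1$, which is exactly the leading term of $Z_{l+1}$.

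The main technical hurdle is to keep the relative $o(1)$ errors uniform in the outer variables $i_{2l+3},\dots,i_{2m}$, and to absorb the additive remainder from $Z_l$. The latter is easier: because the $o(N^{(1+\g)l})$ piece of $Z_l$ does not depend on $i_{2l+1}$, summing it against $e^{c i_{2l+1}^{1-\g}-c i_{2l+2}^{1-\g}}\le 1$ over the pair $(i_{2l+1},i_{2l+2})$ contributes at most $O(N^{1+\g})\cdot o(N^{(1+\g)l})=o(N^{(1+\g)(l+1)})$, which folds into the new remainder. For uniformity of the Laplace step, the exponential tail $e^{-c[i_{2l+2}^{1-\g}-i^{1-\g}]}$ shows that contributions from $i$ far below $i_{2l+2}$ are negligible provided $i_{2l+2}$ is large, while small values of $i_{2l+2}$ (near the lower cutoff $\ms+2l$) contribute only a lower-order term after being summed against the outer variables. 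Iterating down to $l=m$ and invoking the convention $i_{2m+1}\equiv N$ then gives $Q(\ms,N)=N^{(1+\g)m}/[m!\,c^m(1-\g^2)^m]\cdot(1+o(1))$, which is precisely what is needed in~\eqref{upper.est.moment}.
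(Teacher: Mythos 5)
Your argument is correct and essentially the paper's own: the same induction on $l$, peeling off the pair $(i_{2l+1},i_{2l+2})$ with a Laplace-type estimate that produces the factor $\frac{1}{c(1-\g)}$ and cancels the exponentials, followed by a power sum giving $\frac{1}{(1+\g)(l+1)}$, with the additive $o\left(N^{(1+\g)l}\right)$ remainder propagated via the $\O(N^{1+\g})$ bound on the pair sum and the small-index region cut off separately. The only cosmetic difference is that you sum out the smaller index first (concentration at the upper endpoint) working directly with discrete sums, whereas the paper replaces the double sum by a double integral and integrates the larger variable first; the resulting constants and error structure coincide.
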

\begin{proof}
For $l=0$, this is trivially true. Now assume that we have established~\eqref{eq:QQ} for $l\ge 0$. Then
\begin{align*}
Q(\ms,N)&=\sum_{\ms+2l+2\le i_{2l+3}<i_{2l+4}<\dots <  i_{2m-1}<  i_{2m}\le N}
\exp\left\{c\left[i_{2l+3}^{1-\g}-i_{2l+4}^{1-\g}
+\dots+i_{2m-1}^{1-\g}-i_{2m}^{1-\g}\right]\right\}
\\
&\times
\sum_{\ms+2l\le i_{2l+1}<i_{2l+2}<i_{2l+3}}
\exp\left\{c[i_{2l+1}^{1-\g}-i_{2l+2}^{1-\g}]\right\}
\left[\frac{\rho_l\, i_{2l+1}^{(1+\g)l}}{l!c^l(1- \g^2)^l}
+o\left(N^{l(1+\g)}\right)\right]
\end{align*} 
We now need to estimate the sum in the second line.
First note that the sum 
$$
\sum_{\ms\le i_{2l+1}<i_{2l+2}<N}
\exp\left\{c[i_{2l+1}^{1-\g}-i_{2l+2}^{1-\g}]\right\}
$$
(note that each expression is between $0$ and $1$) can be very well approximated by the corresponding integral, since, whenever $y\le x$, $|\tilde x-x|\le 1$ and $|\tilde y-y|\le 1$, the ratio
$$
\frac{e^{c[\tilde{y}^{1-\g}-\tilde{x}^{1-\g}]}}
{e^{c[y^{1-\g}-x^{1-\g}]}}
$$
is bounded above by $e^{c_1[y^{-\g}+x^{-\g}]}$ where $c_1>0$ is some  constant. Hence, outside of the area where~$x$ and~$y$ are both smaller than $\sqrt{N}$, this constant is very close to~$1$, while the double sum over this area is at most~$N$. Therefore, as $N\to\infty$,
\begin{align*}
&\sum_{\ms\le i_{2l+1}<i_{2l+2}<N}
\exp\left\{c[i_{2l+1}^{1-\g}-i_{2l+2}^{1-\g}]\right\}
=(1+o(1))\, \int_{\ms}^{N} \int_{y}^{N} e^{cy^{1-\g}-cx^{1-\g}}\rmd x \rmd y +\O(N).
\end{align*}
To calculate the inner integral, observe
$$
\int e^{-cx^{1-\g}}\cdot\left[1-\frac{\g x^{\g-1}}{c(1-\g)}\right]\rmd x=-\frac{ x^{\g}e^{-cx^{1-\g}}}  {c(1-\g)}+{\rm const },
$$
implying
\begin{align}\label{eqforR}
R \le \int_y^N e^{-cx^{1-\g}}\rmd x \le  \left[1-\frac{\g y^{\g-1}}{c(1-\g)}\right]^{-1} \times R
\text{ where }
R:=\frac{y^\g e^{-cy^{1-\g}}}{c(1-\g)}
 -\frac{N^\g e^{-cN^{1-\g}}}{c(1-\g)}
\end{align}
(note that $y\ge \ms$). On the other hand,
$$
\int_{\ms}^N R e^{c y^{1-\g}}\rmd y
\le \frac{1}{c(1-\g)}
\int_0^N y^{\g}\rmd y=\frac{N^{1+\g}}{c\,(1-\g^2)}.
$$
Consequently, $(*)=\O(N^{1+\g})$ and hence
\begin{align*}
\sum_{\ms+2l\le i_{2l+1}<i_{2l+2}<i_{2l+3}}
\exp\left\{c[i_{2l+1}^{1-\g}-i_{2l+2}^{1-\g}\right\}
\cdot o\left(N^{l(1+\g)}\right)=o\left(N^{(l+1)(1+\g)}\right).
\end{align*} 

The next step is to compute
\begin{align}\label{eqnextstep}
\sum_{\ms+2l\le i_{2l+1}<i_{2l+2}<i_{2l+3}}
 i_{2l+1}^{(1+\g)l} \times \exp\left\{c\left[i_{2l+1}^{1-\g}-i_{2l+2}^{1-\g}\right]\right\}.
\end{align}
Again, we can approximate this sum by the integral
\begin{align*}
&\int_a^b y^q \int_y^b 
  e^{c[y^{1-\g}-x^{1-\g}]}\rmd x \rmd y
=
\int_a^{N^{\g}} \int_y^b \dots
+\int_{N^{\g}}^b \int_y^b 
y^q  e^{c[y^{1-\g}-x^{1-\g}]}\rmd x \rmd y
\\
&=
\int_{N^{\g}}^b y^q   e^{cy^{1-\g}}\left[\int_y^b 
 e^{-cx^{1-\g}}\rmd x\right] \rmd y
+\O(N^{\g q} \cdot N),
\end{align*}
where $a:=\ms+2l$, $b:=i_{2l+3}$, $q:={(1+\g)l}$. We are allowed to do this since, for $|\tilde x-x|\le 1$ and $|\tilde y-x|\le 1$, the ratio
$$
\frac{{\tilde y}^q e^{c\left[
\tilde{y}^{1-\g}-\tilde{x}^{1-\g}\right]}}
{y^q  e^{c\left[y^{1-\g}-x^{1-\g}\right]}}
$$
is bounded above by 
$\left(1+\frac{2l}{y}\right)e^{c_1(x^{-\g}+y^{-\g}}$
where $c_1>0$ is some  constant. Since $y\ge N^{\g}$, this constant is very close to $1$, hence the double sum in~\eqref{eqnextstep} equals
\begin{align}\label{eqdoublint}
(1+o(1))\,
\int_{N^{\g}}^b y^q   e^{cy^{1-\g}}\left[\int_y^b 
 e^{-cx^{1-\g}}\rmd x\right] \rmd y
+o\left(N^{(l+1)(1+\g)}\right).
\end{align}
From~\eqref{eqforR}, since $y\ge N^{\g}$, we get that the inner integral equals $(1+o(1))\, R$. Therefore, the main expression in~\eqref{eqdoublint}, up to a factor $1+o(1)$, equals
\begin{align*}
&\int_{N^{\g}}^b 
\frac{y^{q+\g}}{c(1-\g)}
 -\frac{y^q N^{\g} e^{cy^{1-\g}-cN^{1-\g}}}{c(1-\g)}
  \rmd y
=\frac{b^{q+1+\g}-N^{q+1+\g}}{(q+1+\g)c(1-\g)}
-\int_{N^{\g}}^b 
\frac{y^q N^{\g} e^{cy^{1-\g}-cN^{1-\g}}}{c(1-\g)}   \rmd y
\\
&=:\frac{i_{2l+3}^{(l+1)(1+\g)}}{(l+1)\cdot c(1-\g)^2}
+o\left(N^{(l+1)(1+\g)}\right)+(**).
\end{align*}
Now the only issue which remains is to show that the integral (**) is of smaller order; then the induction step is finished. To this end, fix some $\g<\theta<1$. Then 
\begin{align*}
\int_{N^{\g}}^b y^q N^{\g} e^{cy^{1-\g}-cN^{1-\g}}   \rmd y
&\le
\int_{0}^{N-N^{\theta}} y^q N^{\g} e^{cy^{1-\g}-cN^{1-\g}}   \rmd y
+\int_{N-N^{\theta}}^N y^q N^{\g} e^{cy^{1-\g}-cN^{1-\g}}   \rmd y
\\
&\le
\int_{0}^{N} y^q N^{\g} e^{c[(N-N^{\theta})^{1-\g}-N^{1-\g}]}   \rmd y
+N^{\theta}\times N^{q+\g}
\\
&\le N^{q+\g} \left[N\times e^{-c(1-\g+o(1))N^{\theta-\g}} +N^{\theta}\right]=o\left(N^{q+1+\g}\right)
=o\left(N^{(l+1)(1+\g)}\right),
\end{align*}
as desired.
\end{proof}


\begin{thebibliography}{2016}
\bibitem{AS}
Abramowitz, M., and Stegun, I.~A.  Handbook of mathematical
functions with formulas, graphs, and mathematical tables. Reprint
of the 1972 edition. Dover Publications, Inc., New York, 1992.

\bibitem{IBF}
Bowman, F. Introduction to Bessel Functions.
Dover Publications Inc., New York, 1958.

\bibitem{DUR}  Durrett, R. Probability: Theory and Examples
(1995) (2nd.\ ed.) Duxbury Press, Belmont, California.

\bibitem{MarkovChainCLT} Jones, G.~L.
{\it On the Markov chain central limit theorem.} Probab.\ Surv.~1 (2004), 299--320.

\bibitem{Hong11} Hong, Y.
{\it On computing the distribution function for the Poisson binomial distribution.}  Comput.\ Statist.\ Data Anal.~59 (2013), 41--51.

\bibitem{Lyons}
Lyons, R.
{\it Strong laws of large numbers for weakly correlated random variables.}
Michigan Math.~J. Volume 35, Issue 3 (1988), 353--359.

\bibitem{Volkova}
Volkova, A.~Yu.
{\it A refinement of the central limit theorem for sums of independent random indicators.} (in Russian) Teor.\ Veroyatnost.\ i Primenen.~40 (1995), no.~4, 885--888; translation in
Theory Probab.\ Appl.~40 (1995), no.~4, 791--794 (1996).
\end{thebibliography}
\end{document}